\begin{document}
\title{Some results on domination number of the graph defined by two levels of the $n$-cube}
%
%
\author{Yeshwant Pandit\inst{1}\and
S.L.Sravanthi\inst{1} \and
Suresh Dara\inst{2}\and
S.M.Hegde\inst{1}
}

\institute{Department of Mathematical and Computational Sciences,\\National Institute of Technology Karnataka, Surathkal, Mangalore, India\\
\email{ypandit2001@gmail.com; sravanthi.settaluri@gmail.com; smhegde@nitk.ac.in}\and 
Department of Mathematics,
Birla Institute of Technology,\\
Mesra, Ranchi, India\\
\email{suresh.dara@gmail.com}
}
\maketitle              
\begin{abstract}
Let ${[n] \choose k}$ and ${[n] \choose l}$ $( k > l ) $ where $[n] = \{1,2,3,...,n\}$ denote the family of all $k$-element subsets and $l$-element subsets of $[n]$ respectively. Define a bipartite graph $G_{k,l} = ({[n] \choose k},{[n] \choose l},E)$ such that two vertices $S\, \epsilon \,{[n] \choose k} $ and $T\, \epsilon \,{[n] \choose l} $ are adjacent if and only if $T \subset S$. In this paper, we give an upper bound for the domination number of graph $G_{k,2}$ for $k > \lceil \frac{n}{2} \rceil$ and exact value for $k=n-1$.

\keywords{Dominating set \and Domination number\and $n$-cube.}
\end{abstract}
\section{Introduction}

Let $G=(V,E)$ be a simple graph, a set $S \subseteq V$ is a dominating set if every vertex not in $S$ has a neighbour in $S$. The minimum size of a dominating set in $G$ is called as the domination number of $G$ and is denoted by $\gamma(G)$. 


Let $G_{k,l}$ denote the bipartite graph $G = ({[n] \choose k},{[n] \choose l},E)$ where $[n] = \{1,2,3,...,n\}$ for $n > k > l \geq 1$; ${[n] \choose k}$ and ${[n] \choose l}$ denote the family of all \textit{k}-element subsets and all \textit{l}-element subsets of $[n]$ respectively. Two vertices $S\, \epsilon \,{[n] \choose k} $ and $T\, \epsilon \,{[n] \choose l} $ are adjacent iff $T \subset S$. As mentioned in \cite{1}, the family $ {[n] \choose k} $ is called the $k^{th}$ level of the $n$-cube and $G_{k,l}$ is called as the graph defined by the $k^{th}$ level and the $l^{th}$ level. 

In \cite{1}, Badakhshian, Katona and Tuza proved that  $\gamma(G_{k,1})= n-k+1$ for $k\geq 2$. They have also given upper and lower bounds for $\gamma(G_{k,2})$ and posed the following conjecture.

\begin{conjecture}\label{c1}
	 $\gamma(G_{k,2}) = \frac{k+3}{2(k-1)(k+1)}n^{2} + o(n^{2})$ for $k\geq 3$.
	\end{conjecture}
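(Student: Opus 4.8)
Toward Conjecture~\ref{c1}, the plan is to prove matching upper and lower bounds, both of the form $\bigl(1+o(1)\bigr)\frac{k+3}{2(k-1)(k+1)}n^{2}$. Write any dominating set as $D=\mathcal E\cup\mathcal F$, where $\mathcal E$ consists of its $2$-element members (equivalently, the edge set of a graph $H$ on $[n]$) and $\mathcal F$ of its $k$-element members. Adjacency in $G_{k,2}$ turns the two domination requirements into: (i) every $k$-subset of $[n]$ either lies in $\mathcal F$ or contains an edge of $H$ (so $\mathcal F$ must contain every independent $k$-set of $H$, and (i) is free once $\alpha(H)\le k-1$); and (ii) every pair of $[n]$ lies in $\mathcal E$ or is contained in a member of $\mathcal F$ (so $\mathcal F$ is a covering design for the pairs outside $\mathcal E$). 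Hence $\gamma(G_{k,2})=\min\bigl(|\mathcal E|+|\mathcal F|\bigr)$ over all such pairs.

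For the upper bound I would partition $[n]$ into $k-1$ classes $V_1,\dots,V_{k-1}$ of size $\sim n/(k-1)$, let $\mathcal E$ be all within-class pairs (so $H$ is a disjoint union of $k-1$ near-balanced cliques, $\alpha(H)=k-1$, and (i) holds automatically by pigeonhole), and let $\mathcal F$ be an asymptotically optimal covering of the cross-pairs by $k$-sets meeting the classes in pattern $(2,1,\dots,1)$. Such a $k$-set covers exactly $\binom k2-1=\frac{(k-2)(k+1)}2$ cross-pairs, the relevant set system is nearly regular with negligible codegree, so a Rödl-nibble argument supplies $\mathcal F$ with $|\mathcal F|=(1+o(1))\frac{n^{2}}{(k-1)(k+1)}$; since $|\mathcal E|=(1+o(1))\frac{n^{2}}{2(k-1)}$, the total is $(1+o(1))\frac{k+3}{2(k-1)(k+1)}n^{2}$.

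For the lower bound, fix an optimal $D=\mathcal E\cup\mathcal F$. The upper bound gives $|\mathcal F|=O(n^{2})=o(n^{k})$, so $H$ has $o(n^{k})$ independent $k$-sets, i.e.\ $\overline H$ has $o(n^{k})$ copies of $K_k$; supersaturation for $K_k$ then forces $e(\overline H)\le\binom n2\bigl(1-\tfrac1{k-1}\bigr)+o(n^{2})$, that is $|\mathcal E|\ge\frac{n^{2}}{2(k-1)}-o(n^{2})$. If $|\mathcal E|\ge\frac{k+3}{2(k-1)(k+1)}n^{2}$ we are already done, so assume $|\mathcal E|$ is only a little above $\frac{n^{2}}{2(k-1)}$. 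A removal-lemma-assisted form of Erdős–Simonovits stability then makes $H$ differ in $o(n^{2})$ edges from a balanced disjoint union of $k-1$ cliques $V_1,\dots,V_{k-1}$; consequently $\mathcal F$ must still cover all but $o(n^{2})$ of the $\sum_{i<j}|V_i||V_j|=(1+o(1))\frac{(k-2)n^{2}}{2(k-1)}$ cross-pairs, while a $k$-set meeting the classes in pattern $(a_1,\dots,a_{k-1})$, $\sum a_i=k$, covers only $\binom k2-\sum_i\binom{a_i}2\le\binom k2-1$ of them. This yields $|\mathcal F|\ge\frac{n^{2}}{(k-1)(k+1)}-o(n^{2})$, hence $\gamma(G_{k,2})\ge\frac{k+3}{2(k-1)(k+1)}n^{2}-o(n^{2})$.

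The hard part is the gap left above: the range where $|\mathcal E|$ exceeds $\frac{n^{2}}{2(k-1)}$ by a constant multiple of $n^{2}$ but is still below the conjectured total. There the extremal $H$ is an \emph{unbalanced} union of $k-1$ cliques plus $\Theta(n^{2})$ extra edges, and the off-the-shelf stability theorem only locates $H$ up to $\Theta(n^{2})$ edges — too coarse. Closing it needs a quantitative $K_k$-stability statement with error linear in the excess over the Turán number, followed by a tight computation showing that the cross-pair count one gains by unbalancing the classes is always dominated by the corresponding loss in the lower bound for $|\mathcal F|$, so that $|\mathcal E|+|\mathcal F|$ remains minimized by the balanced construction. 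I expect this trade-off to be the genuine obstruction, and it is presumably why the paper settles only the rigid large-$k$ cases: for $k=n-1$ the $k$-sets are complements of points and the optimum is found by hand, and for $k>\lceil n/2\rceil$ the class structure of $H$ is so tightly constrained that the above estimate can be pushed through directly.
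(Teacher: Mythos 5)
The statement you have set out to prove is not a result of this paper at all: it is Conjecture~\ref{c1}, quoted from \cite{1}, and the paper contains no proof of it. Its own contributions (Theorems~\ref{t1} and~\ref{t2}) live in the regime $k>\lceil n/2\rceil$, where $k$ grows with $n$ and the conjecture's fixed-$k$, $n\to\infty$ asymptotics are not even meaningful (indeed, the conjectured formula extrapolated to $k\approx n/2$ gives about $n$, while Theorem~\ref{t1} gives $\lceil n/2\rceil+6$). So there is no in-paper argument to compare yours against; the only question is whether your proposal settles the conjecture, and it does not.

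The parts you do carry out are sound. The reformulation of domination as conditions on the graph $H$ with edge set $\mathcal E$ and the covering family $\mathcal F$ is exactly right; the construction (balanced $(k-1)$-partition, all within-class pairs, and a R\"odl-nibble cover of the cross-pairs by $k$-sets of type $(2,1,\dots,1)$, each covering $\binom{k}{2}-1=\frac{(k-2)(k+1)}{2}$ cross-pairs) correctly yields $\gamma(G_{k,2})\le\frac{k+3}{2(k-1)(k+1)}n^{2}+o(n^{2})$, which is essentially the known upper bound of \cite{1}; and the supersaturation step giving $|\mathcal E|\ge\frac{n^{2}}{2(k-1)}-o(n^{2})$ is correct. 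The genuine gap is the one you name yourself, and it is not a technicality but the entire open content of the conjecture: when $|\mathcal E|$ exceeds $\frac{n^{2}}{2(k-1)}$ by $\Theta(n^{2})$ while staying below the conjectured total (excess between $0$ and $\frac{n^{2}}{(k-1)(k+1)}$), Erd\H{o}s--Simonovits stability gives no structural information about $H$, so there is no partition relative to which you can cap the number of uncovered pairs that a single member of $\mathcal F$ handles, and your lower bound on $|\mathcal F|$ collapses. Invoking a F\"uredi-type stability statement with error linear in the excess, followed by the balancing trade-off between gained within-class pairs and lost covering efficiency, is a plausible program, but you have not executed either step, and it is precisely this trade-off that has resisted proof. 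As written, your argument establishes the upper bound and the lower bound $\frac{n^{2}}{2(k-1)}-o(n^{2})$, plus the matching lower bound only when $|\mathcal E|$ happens to lie within $o(n^{2})$ of $\frac{n^{2}}{2(k-1)}$; it should be presented as partial progress, not as a proof.
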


\section{Results}
In this section, we give an upper bound for $\gamma(G_{k,2})$ for $k > \lceil \frac{n}{2} \rceil$ and exact value of  $\gamma(G_{k,2})$ for $k=n-1$.

\begin{theorem}\label{t1} $\gamma(G_{k,2}) \leq \lceil \frac{n}{2} \rceil + 6$ for $k > \lceil \frac{n}{2} \rceil$.
\end{theorem}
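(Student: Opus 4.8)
The plan is to exhibit an explicit dominating set $D=D_1\cup D_2$ with $D_1\subseteq\binom{[n]}{k}$ and $D_2\subseteq\binom{[n]}{2}$, where $D_1$ handles the $2$-element side of the bipartition and $D_2$ the $k$-element side. The structural point is that a $k$-set $S\notin D$ can be dominated only by a $2$-set of $D_2$ that lies inside $S$, while a $2$-set $T\notin D$ can be dominated only by a $k$-set of $D_1$ that contains $T$. So it suffices to guarantee that (a) every $k$-set contains some member of $D_2$, and (b) $D_1$ covers every $2$-set, i.e.\ is an edge clique cover of $K_n$ by $k$-element cliques; we aim for $|D_2|\le\lceil n/2\rceil$ and $|D_1|=6$.

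For (a) I would take $D_2$ to be a near-perfect matching on $[n]$, namely the pairs $\{1,2\},\{3,4\},\dots$ together with the extra pair $\{n-1,n\}$ when $n$ is odd; this consists of $\lceil n/2\rceil$ $2$-sets. If some $k$-set $S$ avoided all of these, then $S$ would meet each block $\{2i-1,2i\}$ in at most one point and (for odd $n$) could not contain both $n-1$ and $n$, whence $|S|\le\lceil n/2\rceil<k$, a contradiction. This is exactly where the hypothesis $k>\lceil n/2\rceil$ (and not merely $k>n/2$) is used. Hence every $k$-set contains a pair of $D_2$ and is therefore dominated.

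For (b) I would partition $[n]$ into four parts $P_1,\dots,P_4$ of sizes as equal as possible and, for each of the six pairs $\{i,j\}\subseteq\{1,2,3,4\}$, fix a $k$-set $C_{ij}\supseteq P_i\cup P_j$, setting $D_1=\{C_{ij}\}$. Such a $C_{ij}$ exists because $|P_i\cup P_j|\le 2\lceil n/4\rceil\le\lceil n/2\rceil+1\le k\le n$, the middle inequality being a short case analysis on $n\bmod 4$. Now any pair $\{x,y\}$ with $x\in P_i$, $y\in P_j$ lies in $C_{ij}$ when $i\ne j$, and in $C_{i\ell}$ (for any $\ell\ne i$) when $i=j$, since each part occurs in three of the six cliques; so $D_1$ contains every $2$-set in one of its members and hence dominates the $2$-element side. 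Therefore $D=D_1\cup D_2$ is a dominating set and $|D|\le\lceil n/2\rceil+6$.

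The two calculations needed — the matching bound and the inequality $2\lceil n/4\rceil\le\lceil n/2\rceil+1$ — are routine. The real content, and the step I expect to be the main obstacle, is the realization that $K_n$ admits a clique cover of bounded size by $k$-cliques once $k>\lceil n/2\rceil$: the naive attempt of taking two $k$-sets whose union is all of $[n]$ leaves an uncovered copy of $K_{n-k,\,n-k}$, which would require $\Theta(n)$ further $k$-cliques when $k$ is only slightly above $n/2$; the balanced four-part construction avoids this because any two quarters of $[n]$ jointly fit inside a $k$-set as soon as $k>\lceil n/2\rceil$.
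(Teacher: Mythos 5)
Your proof is correct and follows essentially the same strategy as the paper: $\lceil n/2\rceil$ pairs forming a near-perfect matching dominate every $k$-set, and six $k$-sets obtained as the pairwise unions of a four-part cover of $[n]$ dominate every pair (the paper's $S_1,S_2,T_1,T_2$ play exactly the role of your four parts, with the six sets $S,T,P_1,\dots,P_4$ being the six pairwise unions). One minor inaccuracy in your closing remark: the uncovered $K_{n-k,\,n-k}$ left by two $k$-sets whose union is $[n]$ needs only a bounded number (in fact four) of further $k$-cliques, not $\Theta(n)$ of them, which is precisely what both constructions exploit.
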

\begin{proof}
	Let $S$ and $T$ be the two $k$-element subsets of $[n]$ such that $S \cup T = [n]$.
	
	Let $k$ be even, then define the set $A$ as $$A=\{S,T,P_1,P_2,P_3,P_4\},$$ where $P_1,P_2,P_3,P_4$ be the $k$-element subsets of $[n]$, which are defined as follows:
	 $$P_1 = S_1 \cup T_1, P_2 = S_1 \cup T_2, P_3 = S_2 \cup T_1, P_4 = S_2 \cup T_2,$$
where $S_1, S_2$ are subsets of $S$ such that $|S_1|=|S_2|=\frac{k}{2}$ and $S_1 \cup S_2 = S$, and $T_1, T_2$ are subsets of $T$ such that $|T_1|=|T_2|=\frac{k}{2}$ and $T_1 \cup T_2 = T$.\\

Let $k$ be odd. Since $k > \lceil \frac{n}{2} \rceil$, there exist an element $l$(say) which belongs to $S\cap T$. Define the set $A$ as $$A=\{S,T,P_1,P_2,P_3,P_4\},$$ where $P_1,P_2,P_3,P_4$ be the $k$-element subsets of $[n]$, which are defined as follows:
$$P_1 = S_1 \cup T_1, P_2 = S_1 \cup T_2, P_3 = S_2 \cup T_1, P_4 = S_2 \cup T_2,$$
where $S_1, S_2$ are subsets of $S$ such that $|S_1|=|S_2|=\frac{k-1}{2}$ and $S_1 \cup S_2 = S\setminus \{l\}$, and $T_1, T_2$ are subsets of $T$ such that $|T_1|=|T_2|=\frac{k+1}{2}$ and $T_1 \cup T_2 = T$ and $T_1 \cap T_2 = \{l\}$. Also, one can note that in both the cases ($k$ is odd or even) if $|P_i| < k$, then it can be made $k$ by adding elements from $[n]$.\\

 Let $\{a,b\}$ be any $2$-element subset of $[n]$, then $a,b \in S\cup T$. If, both $a,b$ are the elements of $S$ or $T$, then the set $\{a,b\}$ is dominated by $S$ or $T$ respectively. If not, without loss of generality, it follows that $a \in S$ and $b\in T$. By the definition of $S_1, S_2, T_1$ and $T_2$, $a$ belongs to $S_1$ or $S_2$ and $b$ belongs to $T_1$ or $T_2$. This implies that $a,b$ belongs to $P_i$ for some $i$, $i = 1,2,3,4$. Hence, $A$ dominates every $2$-element subset of $[n]$.

Let $B$ be the set of $\lceil \frac{n}{2} \rceil$ $2$-element subsets of $[n]$ such that $B$ spans $[n]$. Let $S$ be any $k$-element subset of $[n]$, then $|S| \geq \lceil \frac{n}{2} \rceil +1$. Suppose that, no element of $B$ is adjacent to $S$. Then for every element $\{x,y\}$ of $B$, $S$ contains either $x$ or $y$ but not both, implies that $|S| \leq  \lceil \frac{n}{2} \rceil$, which is a contradiction. Hence, $B$ dominates every $k$-element subset of $[n]$.

Therefore, $\gamma(G_{k,2}) \leq |B| + |A| \leq \lceil \frac{n}{2} \rceil + 6 $ for $k> \lceil \frac{n}{2} \rceil$. Hence, the proof.
\end{proof}

\begin{theorem}\label{t2}
	$\gamma(G_{n-1,2}) =3$. 
\end{theorem}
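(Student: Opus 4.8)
The plan is to prove the two inequalities $\gamma(G_{n-1,2})\le 3$ and $\gamma(G_{n-1,2})\ge 3$ separately. The structural fact I would record first is that every $(n-1)$-element subset of $[n]$ is of the form $[n]\setminus\{i\}$ for a unique $i\in[n]$, and that $[n]\setminus\{i\}$ is adjacent to a $2$-element set $\{a,b\}$ if and only if $i\notin\{a,b\}$; in particular the $n$ vertices of size $n-1$ are pairwise nonadjacent, as are the $\binom{n}{2}$ vertices of size $2$. Note also that $G_{n-1,2}$ is defined only when $n-1>2$, so $n\ge 4$ throughout.

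For the upper bound I would exhibit the explicit candidate $D=\{[n]\setminus\{1\},\,[n]\setminus\{2\},\,\{1,2\}\}$ and verify that it dominates every vertex. The sets $[n]\setminus\{1\}$ and $[n]\setminus\{2\}$ belong to $D$; any other $(n-1)$-set $[n]\setminus\{j\}$ has $j\ge 3$, so it contains $\{1,2\}\in D$ and is dominated. The $2$-set $\{1,2\}$ belongs to $D$; any other $2$-set $\{a,b\}$ omits $1$ or omits $2$, hence is a subset of $[n]\setminus\{1\}\in D$ or of $[n]\setminus\{2\}\in D$. This gives $\gamma(G_{n-1,2})\le 3$.

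For the lower bound I would assume a dominating set $D$ with $|D|\le 2$ and reach a contradiction by casework on how many elements of $D$ lie on each side of the bipartition. If $D$ contains two $(n-1)$-sets, then the $n-2\ge 2$ remaining $(n-1)$-sets have all their neighbours among the $2$-sets and so are undominated. If $D$ contains at most one $2$-set and no more than that on the whole $2$-side beyond it, then since all neighbours of a $2$-set lie on the $(n-1)$-side, the $\binom{n}{2}\ge 6$ many $2$-sets not in $D$ are undominated; this disposes of the cases where $D$ has zero, one, or two vertices all on the $2$-side, and also the case of a single vertex. The only surviving case is $D=\{[n]\setminus\{i\},\{c,d\}\}$: here each $(n-1)$-set $[n]\setminus\{j\}$ with $j\ne i$ is not in $D$ and its only possible neighbour in $D$ is $\{c,d\}$, forcing $j\notin\{c,d\}$ for every $j\in[n]\setminus\{i\}$, i.e. $\{c,d\}\subseteq\{i\}$, which is impossible. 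Hence $|D|\ge 3$, and combining with the construction yields $\gamma(G_{n-1,2})=3$.

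The proof is essentially a construction plus a finite case check, so there is no single deep obstacle; the part requiring the most care is organizing the lower-bound casework so that the ``all vertices on one side'' situations and the mixed situation are each handled cleanly, using the bipartite observation that same-side vertices never dominate one another.
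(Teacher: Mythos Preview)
Your argument is correct and follows essentially the same approach as the paper: an explicit three-element dominating set consisting of two $(n-1)$-sets together with the complementary $2$-set, followed by a short case analysis ruling out a dominating set of size at most $2$. The only cosmetic differences are that the paper uses the labels $\{[n]\setminus\{n\},\,[n]\setminus\{1\},\,\{1,n\}\}$ and, in the mixed case $D=\{[n]\setminus\{i\},B\}$, finds an undominated $2$-set of the form $\{i,x\}$ rather than an undominated $(n-1)$-set as you do; your casework is in fact slightly more complete, since the paper asserts without comment that a size-$2$ dominating set must contain one vertex from each side.
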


\begin{proof}
Let $P_1=\{1,2,3, \dots n-1\}$, $P_2= \{2, 3, 4, \dots, n\}$, $P_3=\{1,n\}$ and $D =\{P_1, P_2, P_3\}$. Now, we show that $D$ is a dominating set of $G_{n-1,2}$. Let $S$ be any vertex of the graph $G_{n-1,2}$, then $S$ is either an $(n-1)$-element set or a $2$-element set. 

Let $S$ be an $(n-1)$-element set, then $S$ will be of the form $[n]\setminus \{i\}$. If $S \in D$ we are done, if not $S$ contains $1,n$, therefore $S$ will be dominated by $P_3$.

Let $S$ be a $2$-element set, then $S$ will be of the form of $\{a,b\}$. If $S$ is different from $P_3$, then $S$ will be dominated either by $P_1$ or by $P_2$.

Hence, $D$ is a dominating set of $G_{n-1,2}$. Therefore, $\gamma(G_{n-1,2}) \leq 3$

Now, we prove that $\gamma(G_{n-1,2}) >2$. Since $G_{n-1,2}$ is a bipartite graph, $\gamma(G_{n-1,2}) \geq 2$. Suppose that $\gamma(G_{n-1,2}) = 2$, then the dominating set contains one $(n-1)$-element set (say $A$) and one $2$-element set (say $B$). Let $A=[n]\setminus \{i\}$, then any $2$-element set of the form $\{i,x\}$ different from $B$ will not be dominated either by $A$ or by $B$. Hence, $\gamma(G_{n-1,2}) > 2$. Therefore, $\gamma(G_{n-1,2}) =3$. Hence, the proof.	
	\end{proof}


\section*{Acknowlegdement}
We thank Shashanka Kulamarva, Research Scholar, Department of Mathematical and Computational Sciences, National Institute of Technology Karnataka, Surathkal for carefully reading our paper and giving valuable suggestions.

\end{document}